\documentclass{amsart}

\usepackage{amsmath}
\usepackage{amsfonts}
\usepackage{amssymb}
\usepackage{amstext}
\usepackage{amsbsy}
\usepackage{amsopn}
\usepackage{amsthm}
\usepackage{amsxtra}
\usepackage{graphicx}
\usepackage{caption}
\usepackage{subcaption}
\usepackage{color}
\usepackage{hyperref}
\usepackage{enumerate}
\usepackage{amscd}
\usepackage{bm}
\usepackage{enumitem}
\usepackage{cleveref}

\newlist{types}{enumerate}{10}
\setlist[types]{label*=\arabic*}

\crefname{typesi}{type}{types}
\Crefname{typesi}{Types}{types}

\newtheorem{theorem}{Theorem}[section]
\newtheorem{lemma}[theorem]{Lemma}
\newtheorem{corollary}[theorem]{Corollary}
\newtheorem{proposition}[theorem]{Proposition}

\newtheorem*{conjecture*}{Conjecture}
\newtheorem*{claim*}{Claim}
\newtheorem*{theorem*}{Theorem}

\theoremstyle{remark}
\newtheorem{remark}[theorem]{Remark}
\theoremstyle{definition}

\newtheorem{example}[theorem]{Example}

\newcommand{\A}{\mathcal{A}}

\newcommand{\ZZ}{\mathbb{Z}^2}

\newcommand{\Z}{\mathbb{Z}}

\newcommand{\N}{\mathbb{N}}

\newcommand{\Aut}{{\rm Aut}}
\newcommand{\Prob}{{\rm Prob}}

\begin{document}

\title[Normal amenable subgroups]{\Small Normal amenable subgroups of the automorphism group of sofic shifts}
\author{Kitty Yang} \thanks{This research was supported in part by the National Science Foundation grant 1502632, ''RTG: Analysis on manifolds", at Northwestern University.}
\address{Department of Mathematics, Northwestern University, Evanston, IL 60208 USA}
\email{kyang@math.northwestern.edu}

\begin{abstract}
Let $(X, \sigma)$ be a transitive sofic shift and let $\Aut(X)$ denote its automorphism group. We generalize a result of Frisch, Schlank, and Tamuz to show that any normal amenable subgroup of $\Aut(X)$ must be contained in the subgroup generated by the shift. We also show that the result does not extend to higher dimensions by giving an example of a two-dimensional mixing shift of finite type due to Hochman whose automorphism group is amenable and not generated by the shift maps.
\end{abstract}

\maketitle

\section{Introduction}
\subsection{The center of the group of automorphisms of sofic shifts}
Fix a finite alphabet $\A$. We define a subshift to be a closed set $X \subset \A^\Z$ that is invariant under the shift map $\sigma \colon \A^\Z \to \A^\Z$, where $(\sigma x)_n = x_{n+1}$, for $(x_n \colon n \in \Z) \in X$. A well-studied class are shifts of finite type, or SFTs, which can be described by a finite set of forbidden words (these notions are defined precisely in Section \ref{sec:background}). Closing SFTs under passage to factors, we obtain the sofic shifts, and this is the context of the result in this paper. There has been much interest in the algebraic properties of the automorphism group of the full shift and mixing SFTs (e.g. \cite{BLR,HED, KR,R1}), and many of these results generalize to sofic shifts. The automorphism group $\Aut(X)$ is the group of homeomorphisms of $X$ to itself that commute with $\sigma$, under composition. For any shift $X$, $\Aut(X)$ trivially contains $\langle \sigma \rangle$, the subgroup generated by the shift. By the Curtis-Hedlund-Lyndon Theorem \cite{HED}, any automorphism is a block code; as a corollary, $\Aut(X)$ is always a countable group.

Many results regarding automorphism groups of SFTs are obtained by manipulating marker automorphisms, initially introduced in \cite{HED} as a class of finite order automorphisms of the full shift. Using marker automorphisms, Hedlund proved that the automorphism group of the full shift contains, among others, isomorphic copies of every finite group and the free group on two generators. Complications arise for mixing SFTs with the existence of forbidden patterns, but Boyle, Lind, and Rudolph \cite{BLR} showed that marker automorphisms can also be defined for SFTs, and generalized Hedlund's results to SFTs.

Also using marker automorphisms, Kim and Roush \cite{KR} embedded the automorphism group of the full shift into the automorphism group of any mixing SFT, using markers to encode data words to act as symbols in a full shift. As a corollary, the automorphism groups of the full two-shift and the full three-shift contain the same subgroups up to isomorphism, but it remains an open question whether these automorphism groups are isomorphic. We note that the Kim and Roush construction does not embed the automorphism group of a mixing SFT into the automorphism group of another mixing SFT, as the embedding relies heavily on the lack of forbidden words in the full shift.

On the other hand, a method to distinguish automorphism groups follows from Ryan's Theorem \cite{R1,R2}. By definition, $\langle \sigma \rangle$ is contained in the center of $\Aut(X, \sigma)$. An SFT is transitive if it contains a point whose orbit is dense. Ryan showed that for any transitive SFT, the center is the subgroup generated by the shift. In the full four-shift, the shift map has a root, while the shift map for the full two-shift does not. Using Ryan's Theorem, Boyle, Lind, and Rudolph \cite{BLR} observed that these automorphism groups cannot be isomorphic, as the automorphism group of the full four-shift contains an element not in the center whose square is in the center, while the automorphism group of the full two-shift has no such element.

A topological factor of an SFT is not necessarily an SFT. To address this Weiss \cite{WEI} introduced the notion of sofic shifts, which form the smallest class of subshifts that contain SFTs and is closed under taking factors. It is natural to ask whether results about automorphism groups of SFTs also apply to sofic shifts. As the definition of marker automorphism relies on the fact that all sufficiently long words are synchronizing and transitive sofic shifts contain an abundance of synchronizing words, many of the results about subgroups of automorphism groups of SFTs also carry over to sofic shifts.

More recently, Frisch, Schlank, and Tamuz \cite{FST} generalized Ryan's Theorem to show that any normal amenable subgroup of the automorphism group of the full shift must be contained in the subgroup generated by the shift (see Section \ref{subsec:boundaries} for precise definitions). In this paper, we extend their result to any transitive sofic shift:
\begin{theorem} \label{theorem:thm1}
Let $(X, \sigma)$ be a transitive sofic shift. Any normal amenable subgroup of $\Aut(X)$ is contained in $\langle \sigma \rangle$.
\end{theorem}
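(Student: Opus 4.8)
The plan is to follow the boundary-theoretic strategy of Frisch--Schlank--Tamuz and reduce the statement to a property of the \emph{amenable radical} of $\Aut(X)$, i.e. its largest normal amenable subgroup $\mathrm{Rad}(\Aut(X))$. Since $\langle\sigma\rangle$ is central in $\Aut(X)$, it is itself a normal amenable subgroup, so $\langle\sigma\rangle\subseteq\mathrm{Rad}(\Aut(X))$, and every normal amenable subgroup is contained in $\mathrm{Rad}(\Aut(X))$. Hence the theorem is equivalent to the single inclusion $\mathrm{Rad}(\Aut(X))\subseteq\langle\sigma\rangle$. The key input I would invoke is the general fact that for a countable discrete group $G$ the amenable radical coincides with the kernel of the action of $G$ on its Furstenberg boundary $\partial_F G$; equivalently, an element lies in $\mathrm{Rad}(G)$ if and only if it acts trivially on every minimal strongly proximal $G$-space (a \emph{boundary}). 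Thus it suffices to produce a single boundary action of $\Aut(X)$ whose kernel is contained in $\langle\sigma\rangle$. Because $\langle\sigma\rangle$ is central it automatically lies in the kernel of every boundary action, so the real task is to build a boundary on which the kernel is \emph{no larger} than $\langle\sigma\rangle$: every $g\in\Aut(X)\setminus\langle\sigma\rangle$ must move some point of the boundary.

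Next I would construct such a boundary from the asymptotic/tail structure of $X$, adapting the FST construction to the sofic setting. The guiding observation is that automorphisms, being block codes by the Curtis--Hedlund--Lyndon theorem, preserve the forward-asymptotic relation on $X$ (agreement of coordinates on a right-infinite tail), while the shift acts trivially on this relation once one also quotients by the position of the tail. Concretely, I would work with the space of germs at $+\infty$ of points of $X$ taken modulo $\sigma$, compactified appropriately (passing to an associated space of probability measures, or to a universal minimal strongly proximal model) so as to obtain a genuine compact $\Aut(X)$-space on which $\langle\sigma\rangle$ acts trivially. Here the hypothesis that $X$ is a transitive sofic shift enters through the existence and abundance of synchronizing words: synchronizing words play the role that arbitrary gluing plays in the full shift, allowing one to freely concatenate admissible blocks between markers and thereby to realize enough configurations, and enough supported (marker) automorphisms, to detect the action.

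I would then verify the two defining properties of a boundary. Minimality and strong proximality would both be derived from the gluing afforded by synchronizing words: given any configuration (or measure), inserting it between long synchronizing markers and applying suitable shifts and marker automorphisms drives it arbitrarily close to a prescribed concentrated germ, which is exactly the strong proximality (concentration to a point mass) and simultaneously yields density of orbits (minimality). The final and most delicate step is the kernel computation: I must show that if $g\in\Aut(X)$ fixes every point of the boundary, then $g\in\langle\sigma\rangle$. This is a rigidity statement in the spirit of Ryan's theorem; I would argue that acting trivially on all forward germs forces $g$ to act on each synchronizing block as a uniform translation, and then use transitivity together with the finite radius of the block code $g$ to patch these local translations into a single global power of $\sigma$.

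I expect the main obstacle to be precisely this transfer of FST's full-shift arguments to the sofic category. In the full shift one may concatenate arbitrary words with complete freedom, and this freedom is used both to establish strong proximality and to carry out the kernel rigidity; in a transitive sofic shift one has only the constrained gluing permitted by synchronizing words, and one must moreover work with transitivity rather than mixing. Making the boundary construction, the strong-proximality estimate, and the Ryan-type rigidity go through with synchronizing words in place of unrestricted concatenation---while keeping $\langle\sigma\rangle$ exactly as the kernel---is the technical heart of the argument. As an alternative packaging of the same content, one could instead try to show directly that the normal closure of any $g\in\Aut(X)\setminus\langle\sigma\rangle$ is non-amenable (for instance, that it contains a nonabelian free subgroup assembled from marker automorphisms), since a nontrivial normal amenable subgroup would contain such a normal closure and amenability passes to subgroups; but the boundary formulation is more robust and is the route I would pursue.
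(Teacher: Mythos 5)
Your overall strategy coincides with the paper's: both reduce Theorem~\ref{theorem:thm1} to Furman's characterization of the amenable radical as the common kernel of boundary actions (Proposition~\ref{proposition:furman}), both build a boundary out of tail data on which $\sigma$ acts trivially, both extract minimality and proximality from marker automorphisms supported on synchronizing words, and both finish with a Ryan-type kernel computation. The gaps sit exactly where the paper does its real work. First, your boundary is never actually constructed. ``Germs at $+\infty$ modulo $\sigma$, compactified appropriately'' is not yet a compact metric $\Aut(X)$-space: for an arbitrary point there is no canonical way to normalize the position of its germ, so the quotient by $\sigma$ is non-Hausdorff, and no compactification is exhibited. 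The paper's device is to use \emph{left-periodic} points: the breaking index of a left-$k$-periodic point is canonical, so the set $Q_k$ of left-$k$-periodic points broken exactly at index $k$ is compact, its translates $\sigma^i Q_k$ are pairwise disjoint, and the cocycle $g \cdot x = \sigma^{\alpha(g,x)} g x$ of Lemma~\ref{lem:Q_k} yields a genuine action under which $\sigma$ acts trivially; the boundary is then the space $\Omega$ of sections of the projection $Q_k \to \mathrm{Per}_k/\langle \sigma \rangle$. Your fallback of passing to ``a universal minimal strongly proximal model'' is circular: the kernel of the universal boundary action \emph{is} the amenable radical, so computing that kernel is precisely the theorem; Furman's result only has teeth when one exhibits a concrete boundary whose kernel can be computed by hand.

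Second, you do not address the one phenomenon that genuinely distinguishes the sofic case from the full shift and from SFTs: a strictly sofic shift can contain periodic and left-periodic points in which no synchronizing word occurs at all. Every marker automorphism fixes such points, and the paper cannot even rule out that \emph{every} automorphism fixes them (see the remark following the type trichotomy in Section~\ref{subsec:extreme_proximality}). Consequently, on the space of \emph{all} germs---equivalently, on $\Omega$ taken over all $k$-periodic orbits---minimality can fail outright, and your proposed proof of strong proximality by ``inserting between long synchronizing markers'' has nothing to act on in those fibers. The paper's fix is to restrict the boundary to $\widetilde{\Omega}$, the sections over the orbits in $\mathrm{Syn}_k$ (points that are synchronizing or are carried to synchronizing points by some automorphism), to run the marker construction in the synchronizing fibers (Proposition~\ref{proposition:prop1}, Case 1), and to conjugate that construction by a suitable $h \in \Aut(X)$ in the remaining fibers (Case 2); one must then still verify, via a transitive left-$k$-periodic point as in Lemma~\ref{lem:Q_k}, that the kernel of this \emph{restricted} action is no larger than $\langle \sigma \rangle$. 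Your outline, as written, neither detects this obstruction nor supplies the restriction and conjugation trick that circumvent it, so the ``technical heart'' you defer is not merely technical: without it the proposed boundary need not be a boundary at all.
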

This generalizes Ryan's Theorem, as subgroups of the center are always normal amenable subgroups, and in the case of a transitive sofic shift, our result says the converse also holds.

Complications arise when working in sofic shifts, as there can be arbitrarily long non-synchronizing words. Such words are necessary for marker automorphisms to actually be an automorphism of $X$. To overcome this problem, we first construct marker automorphisms for left-periodic points composed of synchronizing words as in the definition of extreme proximality (see Section \ref{subsec:boundaries}). We then extend the result to non-synchronizing left-periodic points that are in the $\rm{Aut}(X)$-orbit of the synchronizing points.

\subsection{Methods of proof}
As in \cite{FST}, we use a characterization of the topological boundary due to Furman \cite{FUR} to prove Theorem \ref{theorem:thm1} (see Section \ref{sec:background} for precise definitions).

For any shift $X$, $\Aut(X)$ acts on the set of left-periodic points. In the case of a transitive sofic shift, we show that an invariant subset of this space is in fact a topological boundary for $\rm{Aut}(X)$, and the kernel of the action is $\langle \sigma \rangle$. By Furman's characterization, we can conclude that any normal amenable subgroup of $\Aut(X)$ must be contained in $\langle \sigma \rangle$.

A key ingredient of the paper is Proposition \ref{proposition:prop1}: given any two cylinder sets of the boundary, we construct an automorphism of $X$ that maps one into the other. This is not needed in the case of the full shift, as \cite{FST} explicitly constructs a sequence of automorphisms which map proper closed subsets of the boundary to a particular point.

In the proof of Theorem \ref{theorem:thm1}, we show that the action of $\Aut(X)$ on the boundary is extremely proximal. This gives an alternate proof that for an infinite transitive sofic shift, the automorphism group contains a copy of the free group on two generators, and more generally, the free group on any number of generators. 

\subsection{Higher dimensions}
In Section \ref{sec:higher dim} we highlight the obstructions that arise when adapting these methods to higher dimensional shifts. Hochman \cite{HOC} proves a higher-dimensional analogue of Ryan's Theorem: for a transitive $\Z^d$ SFT with positive entropy, the center of the automorphism group is the subgroup generated by the shifts, which can be naturally identified with $\Z^d$. While \cite{FST} also shows that for the full $\Z^d$-shift, any normal amenable subgroup of the automorphism group is contained in the subgroup generated by the shift maps, $\Z^d$, there is an example due to Hochman of a topological mixing $\ZZ$ SFT with positive entropy whose automorphism group is amenable but strictly larger than $\ZZ$.

\section{Preliminaries} \label{sec:background}
\subsection{Shift systems}
Let $\A$ be a finite alphabet endowed with the discrete topology and equip $\A^\Z$ with the product topology. For $x = (x_n \colon n \in \Z) \in \A^\Z$, let $x_n \in \A$ denote the value of $x$ at $n \in \Z$. Define the shift map $\sigma \colon \A^\Z \to \A^\Z$ by $\sigma x_n = x_{n+1}$ for any $x \in \A^\Z$. If $X \subset \A^\Z$ is a closed and shift-invariant subset, we call $(X, \sigma)$ a \emph{shift}. To avoid trivial cases, we assume $X$ is infinite. Given $x \in X$, let
$$\mathcal{O}(x) := \{\sigma^i x: i \in \Z\}$$
be the orbit of $x$ under the shift, and $\overline{\mathcal{O}(x)}$ denote its closure in $X$.

Given an interval $[i, i+n-1] \subset \Z$, let $x_{[i,i+n-1]}$ be the word $w$ in $\A^{n}$ given by $w_j=x_{i+j}$ for $j = 0, 1, \dots, n-1$. A word $w$ in $\A^n$ is \emph{allowable in X} if there exists $x \in X$ and $j \in \Z$ such that $w=x_{[i,i+n-1]}$; we say that $w$ \emph{occurs in x at i}. For any word $w \in \A^{n}$, let $|w|=n$ be the length of the word. We denote the collection of allowable words of length $n$ in $X$ by $\mathcal{L}_n(X)$, and the \emph{language of $X$}, $\mathcal{L}(X) = \bigcup_{n \in \N} \mathcal{L}_n(X)$, is the set of all finite words that occur in $X$. Given two words, $u$ and $w$, $uw$ is the word in $\A^{|u|+|w|}$ obtained by concatenating $u$ and $w$; when we concatenate a word with itself, we will simplify by writing $w^2$.

Given a word $w \in \mathcal{L}(X)$, define the cylinder set $[w] \subset X$ to be
$$[w] := \{x \in X: x_n = w_n \text{ for } 0 \le n < |w|\}.$$
Such cylinder sets are clopen and, together with their translates, form a basis for the subspace topology on $X$. Thus, we can describe topological properties of a shift in terms of its language. A shift $(X, \sigma)$ is \emph{transitive} if for any pair of words $u$ and $w \in \mathcal{L}(X)$, there is some word $v$ such that $uvw\in \mathcal{L}(X)$; a shift $(X, \sigma)$ is \emph{mixing} if for any $ u,w\in \mathcal{L}(X)$, there exists an $N$ such that for any $n \ge N$, there is a word $v \in \mathcal{L}_n(X)$ such that $uvw$ is again allowable. Note that mixing implies transitivity.

A word $w \in \mathcal{L}(X)$ is \emph{synchronizing} if whenever $uw$ and $wv \in \mathcal{L}(X)$, then $uwv$ is again allowable in $X$. It follows that if $w$ is synchronizing, then any word that contains $w$ must also be synchronizing.

A point $x \in X$ is \emph{periodic} if there exists $k \in \N$ such that $x_n = x_{n-k}$ for all $n \in \Z$; we say $x \in X$ is \emph{left-periodic up to $N \in \Z$} if there exists $k \in \N$ such that $x_n = x_{n-k}$ for all $n < N$ and $x_N \ne x_{N-k}$. Note that a left-periodic point is not periodic, and that the index $N$ is independent of the choice of $k$. When the periodic index $k$ is minimal, we say that $x$ is (left-)$k$-periodic. Denote the set of points in $X$ of period $k$ by $\rm{Per}_k$. If $X$ contains a left-periodic point, then it must also contain a periodic point, but the converse is not true.

A shift is of \emph{finite type}, or an SFT, if it can be described by a finite set of forbidden words; that is, $X$ is an SFT if there exists a finite set of words $\mathcal{F}$ such that $x \in X$ if and only if any word that occurs in $x$ is not an element of $\mathcal{F}$. An SFT is \emph{$j$-step} if $\mathcal{F}$ consists of words of length $j+1$. We can also characterize SFTs using synchronizing words: a shift $X$ is an SFT if all words of sufficient length are synchronizing.

When $(X, \sigma)$ is a transitive SFT, it is a classical result that $X$ can be decomposed into disjoint mixing components which are cyclically permuted. More precisely, there exists a period $p$ and subsets $\{E_i\}_{1 \le i \le j}$ such that each $(E_i,\sigma^p)$ is mixing, and $\sigma E_i = E_{i+1 \mod j}$ (see \cite[\textsection 3]{AM}\cite[p.~543]{BOY}). Here, $p$ refers to the greatest common divisor of all $k$ with $\rm{Per}_k \ne \emptyset$. This extra structure says that in the case of a transitive SFT, given $w$ and $u$, we can extend $u$ on the left to $\widetilde{u}$ and for sufficiently large $n$, there is a word $v$ of length $np$ with $wv\widetilde{u} \in X$. Transitivity also implies that periodic points are dense for SFTs.


The class of SFTs is not closed under factors, and we can consider the larger natural class which is. Given a compact metric space $X$ and a homeomorphism $T$, a \emph{topological factor} is a system $(Y, S)$ with a continuous surjective map $\pi:X \to Y$ satisfying $\pi \circ T = S \circ \pi$. We say that a shift is \emph{sofic} if it is a topological factor of an SFT. There are many equivalent definitions of sofic shifts, and we refer the reader to \cite[Theorem 3.2.1]{LM} for more details. Sofic shifts are the smallest class of shifts which are closed under taking factors and contain SFTs. Note that transitivity and mixing are each preserved under factors, and a transitive (cf. mixing) sofic shift is a factor of a transitive (cf. mixing) SFT. It follows that in a transitive sofic shift, periodic points are also dense. By recoding, we can assume that this factor map is a 0-block map. This presentation is convenient as it allows us to lift words in the sofic shift to words of the same length in the SFT. It follows that in transitive sofic shifts, as with transitive SFTs, between any two allowable words, we can insert arbitrary spacer words whose lengths form an arithmetic progression.

\subsection{Automorphisms of a shift}

An \emph{automorphism of $(X, \sigma)$} is a homeomorphism from $X$ to itself that commutes with the shift map.

By the Curtis-Hedlund-Lyndon Theorem \cite{HED}, any automorphism is defined by a \emph{block code}: given an automorphism $g \in \Aut(X)$, there exists an $R \in \N$ and a map $\hat{g}\colon \mathcal{L}_{2R+1}(X) \to \A$ such that $g x_i = \hat{g}(x_{i-R,\cdots, i+R})$. We say that $R$ is a \emph{range} for $\varphi$.

The set of automorphisms of $X$ under composition forms a group $\Aut(X, \sigma)$, or simply $\Aut(X)$ when $\sigma$ is clear from context. Since only finitely many automorphisms can have a given range, $\Aut(X)$ is countable. Given two automorphisms $g_1, g_2 \in \Aut(X)$, let $g_1g_2$ denote the composition $g_1 \circ g_2$.

In general, it is difficult to construct automorphisms of an arbitrary shift; however, if a shift contains synchronizing words, there are finite order automorphisms called \emph{marker automorphisms} originally defined by Hedlund \cite{HED} for full shifts, and later for SFTs by Boyle, Lind, and Rudolph \cite{BLR}. We now define marker automorphisms more generally, making slight modifications to conventions introduced in \cite{FST}.

We say that two words $w$ and $u$ \emph{overlap} if we can write $w=w'v$ and $u = vu'$ (or vice versa). When needed, we specify the length of overlap, and we say that $w$ and $u$ overlap with length $i$, where $i = |v|$.

Let $(X, \sigma)$ be a shift and $M_\ell$ and $M_r \in \mathcal{L}(X)$ be synchronizing words. Let $\mathcal{D} \subset \mathcal{L}_n$ be a set of words of length $n$ appearing in $X$ such that words of the form $M_\ell dM_r$ are allowable for all $d \in \mathcal{D}$. Suppose these words satisfy the following overlap condition: for any $d$ and $d' \in \mathcal{D}$, if $M_\ell dM_r$ and $M_\ell d'M_r$ overlap nontrivially with length $i$, then $i \le \min(|M_\ell|, |M_r|)$. Then any permutation $\tau$ of $\mathcal{D}$ induces an automorphism $g_\tau$ on $X$ by sending words of the form $M_\ell dM_r$ to $M_\ell \tau(d)M_r$ and leaving other words unchanged. Such an automorphism is called a \emph{marker automorphism}, and we refer to $M_\ell$ and $M_r$ as the \emph{left and right markers}, respectively, and $d \in \mathcal{D}$ as \emph{data words}. We note that as originally defined for a $j$-step SFT, the length of marker words have to be greater than $j$. The key is that such words are synchronizing, which is the necessary condition to ensure that applying the map does not introduce forbidden words.

\begin{example}
Let $X \subset \{0,1\}^\Z$ be the 1-step SFT defined by the forbidden word $11$. $(X, \sigma)$ is the golden mean shift.

Let $M_\ell = 100$ and $M_r = 0101$ be start and end markers, and $\mathcal{D} = \{0, 1\}$ be data words.
Given special blocks of the form $M_\ell dM_r$ and $M_\ell d'M_r$, for $d, d' \in \mathcal{D}$, $M_\ell dM_r$ and $M_\ell d'M_r$ can only overlap nontrivially by length 1.

Let $g \in \Aut(X)$ be the marker automorphism induced by the nontrivial permutation on $\mathcal{D}$. It permutes blocks of the form
\begin{align*}
\cdots 100 &\textbf{1} 0101 \cdots \\
\cdots 100 &\textbf{0} 0101 \cdots
\end{align*}
and leaves other blocks unchanged.
\end{example}

\begin{example}
Let $(Y, \sigma) \subset \{0,1\}^\Z$ be the even shift, consisting of bi-infinite sequences with only even number of consecutive 1s. $Y$ is a factor of the golden mean shift in the previous example, so it is sofic, but it is not an SFT. The word $1^{2i+1}$ for any $i \in \mathbb{N}$ is allowable (as a subword of $01^{2i+2}0$) and $01^{2i+1}, 1^{2i+1}0$ are also allowable. However, $01^{2i+1}0$ is not in the language of $Y$, so $1^{2^i+1}$ is not a synchronizing word. We have generated arbitrarily long non-synchronizing words; thus, $Y$ cannot be an SFT. We call such shifts \emph{strictly sofic}.

We note that any word which contains $0$ is a synchronizing word, so we can define marker automorphisms with markers that contain $0$.
\end{example}

\subsection{Generalized Ryan's Theorem for sofic shifts}
By generalizing the definition of marker automorphisms, we can adapt the proof of Ryan's Theorem to show that for a transitive sofic shift, the center of the automorphism group must be the subgroup generated by the shift. We show that for a transitive sofic shift, the automorphism group contains enough markers so that Ryan's Theorem still holds. The key proposition needed, which we state without proof, is that a transitive shift contains infinitely many synchronizing words

\begin{proposition}\cite[Proposition 3.3.16]{LM} \label{prop:synchronizing}
Suppose $(X, \sigma)$ is a transitive sofic shift. Then any word $w \in \mathcal{L}(X)$ can be extended on the right to a synchronizing word $wu$.
\end{proposition}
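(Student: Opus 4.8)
The plan is to produce the synchronizing right-extension in two stages: first exhibit \emph{some} synchronizing word anywhere in the shift, and then splice it onto the right of $w$ using transitivity. The engine driving the first stage is the notion of a \emph{follower set}: for $v \in \mathcal{L}(X)$ write $F(v) := \{ z \in \mathcal{L}(X) : vz \in \mathcal{L}(X)\}$ for the set of words that may legally follow $v$. Two elementary facts organize the argument. First, because $\mathcal{L}(X)$ is closed under passing to subwords, left-extension can only shrink follower sets: if $m$ is a suffix of a word $p$, then $F(p) \subseteq F(m)$; in particular $F(um) \subseteq F(m)$ whenever $um \in \mathcal{L}(X)$. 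Second---and this is where soficity is essential---a sofic shift has only \emph{finitely many} distinct follower sets, one of the standard characterizations of soficity (cf. \cite[Theorem 3.2.1]{LM}).

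First I would use finiteness to choose a word $m \in \mathcal{L}(X)$ whose follower set $F(m)$ is minimal with respect to inclusion among all follower sets, and then check that any such $m$ is automatically synchronizing. Indeed, suppose $um, mv \in \mathcal{L}(X)$. The monotonicity fact gives $F(um) \subseteq F(m)$, so minimality of $F(m)$ forces $F(um) = F(m)$. Since $mv \in \mathcal{L}(X)$ means $v \in F(m) = F(um)$, we conclude $umv \in \mathcal{L}(X)$, which is exactly the defining property of a synchronizing word. This produces a synchronizing word $m$ somewhere in $X$, with no reference to the given $w$.

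To finish, I would connect $w$ to $m$ on the right. By transitivity applied to the pair $w, m \in \mathcal{L}(X)$, there is a spacer word $v$ with $wvm \in \mathcal{L}(X)$. Setting $u := vm$, the extension $wu = wvm$ contains the synchronizing word $m$, and the excerpt already records that any word containing a synchronizing word is itself synchronizing. Hence $wu$ is a synchronizing right-extension of $w$, as desired.

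The main obstacle is the soficity input, namely the finiteness of the collection $\{F(v) : v \in \mathcal{L}(X)\}$; everything after that is bookkeeping about inclusions of follower sets. If one prefers not to invoke this characterization as a black box, I would instead fix a right-resolving labeled-graph presentation of $X$, whose finitely many states realize the distinct follower sets, and run the minimality argument at the level of states: a word that drives every compatible path to a single terminal state is synchronizing, and irreducibility of the presentation (inherited from transitivity) together with transitivity again lets one route $w$ into such a word. Either way the only genuinely sofic ingredient is finiteness, and the combinatorial core is the observation that an inclusion-minimal follower set cannot be strictly shrunk by left-extension.
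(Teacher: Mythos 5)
Your proposal is correct, but there is nothing internal to compare it against: the paper deliberately states this proposition \emph{without proof}, deferring entirely to \cite[Proposition~3.3.16]{LM}. The textbook proof there runs through the minimal right-resolving labeled-graph presentation of $X$: one extends $w$ to a word that focuses all presenting paths to a single terminal state, and then checks that such ``synchronizing words for the presentation'' are intrinsically synchronizing (precisely the graph-dependent versus intrinsic distinction the paper flags in the remark after the proposition). Your argument is a presentation-free rendering of the same mechanism, since the finitely many follower sets are exactly the states of the minimal right-resolving presentation and your inclusion-minimal follower set plays the role of a focused state. Each step checks out: suffix monotonicity $F(um) \subseteq F(m)$ holds because the language is closed under subwords; a minimal follower set exists because the collection $\{F(v)\}$ is finite (the genuinely sofic input); minimality upgrades the inclusion to the equality $F(um)=F(m)$, which is verbatim the synchronizing property of $m$; and transitivity plus the paper's recorded fact that any word containing a synchronizing word is itself synchronizing splices $m$ onto the right of $w$. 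What your route buys is a clean factorization of the statement into ``some synchronizing word exists'' (soficity alone) and ``attach it to $w$'' (transitivity alone), with no choice of presentation; what the Lind--Marcus route buys is machinery (the minimal presentation and its focusing words) that is reused elsewhere in the theory. One cosmetic point: the follower-set characterization of soficity is not \cite[Theorem~3.2.1]{LM} (the equivalence the paper cites) but a separate result in that section of \cite{LM}; since you invoke it only as a black box, this affects the citation, not the argument.
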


\begin{remark}
We note that in \cite{LM}, sofic shifts are defined as the set of all bi-infinite paths on a labeled graph. The definition of synchronizing word in \cite{LM} is dependent on the graph, while they use the term \emph{intrinsically synchronizing} to denote words we call synchronizing. However, if one chooses the minimal graph presentation for the sofic shift, these definitions coincide.
\end{remark}

To prove Ryan's Theorem for transitive sofic shifts, it suffices to show that there exist infinitely many synchronizing words which do not overlap themselves.

\begin{lemma} \label{lem:syncmarkers}
Let $(X, \sigma)$ be a transitive sofic shift. Then for any $n \in \mathbb{N}$, there is a synchronizing word $M$ of at least length $n$ which does not overlap itself non-trivially.
\end{lemma}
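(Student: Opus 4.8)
The plan is to restate the goal combinatorially and then build $M$ by hand out of a single synchronizing landmark separated by spacers of strictly increasing length. Recall that $M$ fails to overlap itself nontrivially precisely when it is \emph{unbordered}: no nonempty proper prefix of $M$ equals a proper suffix of $M$ (equivalently, $M$ has no period $p$ with $1 \le p < |M|$). Since any word containing a synchronizing word is again synchronizing, it suffices to produce, for every $n$, an \emph{allowable} unbordered word of length at least $n$ that contains a synchronizing subword; its synchronizing property is then automatic.

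First I would fix a synchronizing word $s$, which exists by Proposition \ref{prop:synchronizing}. Using transitivity in the form recorded at the end of Section \ref{sec:background} --- between any two allowable words one may insert spacer words whose lengths range over an arithmetic progression --- I obtain, for the pair $(s,s)$, spacers $v_\ell$ with $|v_\ell| = a + \ell p$ and $s v_\ell s \in \mathcal{L}(X)$ for all large $\ell$. Because $s$ is synchronizing, these blocks glue freely along the shared copy of $s$: for any choice of indices the concatenation $s v_{\ell_1} s v_{\ell_2} s \cdots v_{\ell_m} s$ is again allowable. I then choose $\ell_1 < \ell_2 < \cdots < \ell_m$ so that the distances $d_i := |s| + |v_{\ell_i}|$ between consecutive occurrences of the landmark $s$ are \emph{strictly increasing}, and set $M := s v_{\ell_1} s v_{\ell_2} s \cdots v_{\ell_m} s$. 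Taking $m$ large makes $|M| \ge n$, and $M$ contains $s$, so $M$ is synchronizing.

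It remains to see that $M$ is unbordered. A nontrivial border corresponds to a shift of $M$ by some $0 < p < |M|$ that carries $M$ onto itself on the overlap. The heart of the argument is that such a shift must send landmark occurrences to landmark occurrences; letting $0 = q_0 < q_1 < \cdots < q_m$ be the left endpoints of the successive copies of $s$, the shift would have to identify an initial run of the gap sequence $(d_i)$ with a terminal run of the same sequence. Since the gaps are strictly increasing this is impossible unless $p = 0$, giving the desired contradiction.

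The step I expect to be the main obstacle is precisely the claim that the landmark $s$ occurs in $M$ \emph{only} at the designed positions $q_0, \dots, q_m$: a priori $s$ could appear inside a spacer or straddling a boundary, and such a spurious occurrence would break the rigidity of the gap sequence and could also create a short border. I would control this by taking $s$ long and exploiting that it is synchronizing (so that any occurrence of $s$ pins down a consistent block boundary), and by choosing the spacers $v_\ell$ --- which transitivity leaves us free to lengthen --- so as to avoid internal copies of $s$; the strictly increasing gaps then rule out all long borders, while the remaining short borders are eliminated by the same boundary rigidity. Everything else (allowability of each concatenation, attaining length $\ge n$) is routine given the spacer property and the synchronizing gluing.
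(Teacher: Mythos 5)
There is a genuine gap, and it is fatal as the construction stands: you never rule out borders of $M$ \emph{shorter than} $|s|$. Since $M$ begins and ends with the landmark $s$, every border of $s$ is automatically a border of $M$: if $s=aba$, say, then $a$ is a nontrivial self-overlap of $M$ no matter how cleverly the gaps $d_i$ are arranged. So your $M$ is unbordered only if $s$ itself is unbordered --- and producing even one unbordered synchronizing word is essentially the content of Lemma \ref{lem:syncmarkers} itself. Proposition \ref{prop:synchronizing} hands you a synchronizing word as a right-extension $wu$ with no control whatsoever over prefix--suffix coincidences, and your appeal to ``boundary rigidity'' cannot close this hole: the synchronizing property is a gluing statement about the language (if $uw, wv \in \mathcal{L}(X)$ then $uwv \in \mathcal{L}(X)$); it constrains neither whether a prefix of $s$ equals a suffix of $s$, nor where $s$ may occur inside a longer word. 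The same misreading undermines your treatment of spurious occurrences: transitivity supplies \emph{some} spacer $v$ with $svs$ allowable, but gives you no license to choose $v$ avoiding internal or straddling copies of $s$, and for a given $s$ this can be outright impossible (in the golden mean shift with $s=0$, every allowable word of length $2$ contains $s$). Without control of the full occurrence set of $s$, the strictly-increasing-gap argument collapses, since a border can align spurious occurrences with designed ones; you flagged this obstacle yourself, but the proposed remedies do not address it.

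By contrast, the paper's proof gets unborderedness for free from periodicity rather than building it by hand: take a synchronizing $w$ with $|w| \ge n$, extend by transitivity to the synchronizing word $wuw$, and use density of periodic points to find a periodic point $x$ containing $wuw$ with minimal period $k \ge |wuw|$. Minimality of $k$ means the length-$k$ cyclic word is primitive, so some window $M$ of length $k$ in $x$ does not overlap itself (this is the parenthetical observation in the paper; one can see it, e.g., by taking a Lyndon rotation), and since $|wuw| \le k$, that window must contain a full copy of $w$, hence is synchronizing. Your concatenation scheme could in principle be repaired, but only by first securing an unbordered synchronizing $s$ together with a genuine argument pinning down all occurrences of $s$ in the spacers --- at which point the periodic-point argument is both shorter and more robust.
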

\begin{proof}
By the definition of periodicity, if $x \in X$ is $k$-periodic, then there must be some subword of length $k$ appearing in $x$ which does not overlap itself (otherwise $k$ would not be minimal).

Let $w$ be a synchronizing word of at least length $n$. By transitivity, there exists $u \in \mathcal{L}(X)$ such that $wuw \in \mathcal{L}(X)$. Note that $wuw$ is again a synchronizing word.

Since periodic points are dense, there is a periodic point $x$ of period $k \ge |wuw|$ such that $wuw$ appears in $x$. By the observation above, $x$ must contain a subword $M$ of length $k$ which does not overlap itself. Since $|wuw| \le k$, $w$ must appear in $M$, and thus $M$ is a synchronizing word.
\end{proof}

For completeness, we give a proof of the generalized Ryan's Theorem, due to Kitchens \cite[Theorem 3.3.22]{KIT}.

\begin{theorem} Let $(X, \sigma)$ be a transitive sofic shift. The center of $\rm{Aut}(X)$ is $\langle \sigma \rangle$.
\end{theorem}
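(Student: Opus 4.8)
The plan is to prove the nontrivial inclusion. Since $\sigma$ commutes with every automorphism, $\langle\sigma\rangle$ is automatically contained in the center, so it suffices to show that every central $\phi \in \Aut(X)$ is a power of $\sigma$. Fix such a $\phi$ and let $R$ be a range for it. Using Lemma \ref{lem:syncmarkers}, I would choose a synchronizing word $M$ that does not overlap itself nontrivially and has length $m = |M| > 2R$. Because $M$ is synchronizing and $X$ is transitive, one can form ``marked blocks'' $MdM$ for data words $d$ in a suitable set $\D \subset \mathcal{L}_n(X)$, and for any two such words $d_0, d_1$ the transposition swapping $Md_0M \leftrightarrow Md_1M$ while fixing all other blocks is a genuine marker automorphism $g$ (the non-self-overlap of $M$ supplies exactly the overlap condition in the definition).

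The core step is a commutation argument. Fix $d_0 \neq d_1$ in $\D$ with associated marker automorphism $g$, and, using transitivity together with Proposition \ref{prop:synchronizing}, build a point $x$ containing the marked block $Md_0M$ at a known location and in which no other occurrence of $Md_0M$ or $Md_1M$ appears. The relation $\phi g = g\phi$ gives two descriptions of $\phi g(x)$ that must agree. On one hand $g(x)$ differs from $x$ only on the data interval, so $\phi(g(x))$ differs from $\phi(x)$ only within distance $R$ of that interval. On the other hand $g(\phi(x))$ differs from $\phi(x)$ exactly where a marked block occurs in $\phi(x)$. Since $\phi$ is injective and $g(x) \neq x$, the point $\phi(x)$ must contain at least one marked block, and its data must lie in the $R$-neighborhood of the original data interval. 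As $m > 2R$ and $M$ does not overlap itself, this region is too short to contain two well-separated marker occurrences, so $\phi(x)$ carries exactly one marked block, displaced from the original by an integer $k_x$ whose size is bounded in terms of $R$.

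It then remains to show that this displacement is a single global constant $k$ and that $\sigma^{-k}\phi$ is the identity. Shift-equivariance immediately gives $k_{\sigma^i x} = k_x$, and by gluing two widely separated markers into one transitive point (again via Proposition \ref{prop:synchronizing}) one checks that the displacements attached to different markers and different data must agree, yielding a constant $k$. The map $\psi = \sigma^{-k}\phi$ is again central and now sends each isolated marked block to a marked block in the \emph{same} position; a second application of the commutation argument pins down the data as well, so $\psi$ fixes every point carrying a single isolated marker. Such points are dense, and since every sufficiently long synchronizing word localizes the action of the range-$R$ map $\psi$, this forces $\psi = \mathrm{id}$, i.e.\ $\phi = \sigma^k$.

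I expect the main obstacle to be the middle claim that the displacement is globally constant, together with the upgrade from ``$\psi$ fixes marker positions'' to ``$\psi$ is the identity.'' This is exactly where transitivity, the density of periodic (hence marker-containing) points, and the non-self-overlap of $M$ all enter. It is also where the strictly sofic case demands care: arbitrarily long words need not be synchronizing, so every marker and every gluing must be assembled from genuinely synchronizing words furnished by Lemma \ref{lem:syncmarkers} and Proposition \ref{prop:synchronizing}, rather than from ``all sufficiently long words'' as one may do in the SFT setting.
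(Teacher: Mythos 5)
Your overall strategy---a commutation argument against a marker transposition $g$, tracking how the central map $\phi$ displaces an isolated marked block---is a genuinely different route from the paper's proof, and its core step is sound: if $x$ has a \emph{unique} marked block, then $g\phi(x)=\phi g(x)$, injectivity of $\phi$, and $|M|>2R$ together with non-self-overlap of $M$ do force $\phi(x)$ to carry a unique marked block displaced by some $k_x$ with $|k_x|\le R$. But the step you yourself flag as the main obstacle is a genuine gap, and your proposed fix does not close it. Gluing a block with local context $A$ at the origin and a block with context $B$ at a faraway position $L$ into one point $y$ (with disjoint data pairs so each is separately detectable) only yields that $\phi(y)$ has one $A$-type block at $k_A$ and one $B$-type block at $L+k_B$; nothing in the commutation relation contradicts $k_A\ne k_B$---indeed, what you are trying to prove is precisely that no central range-$R$ code can shear different regions by different amounts, so this cannot come for free. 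You need an additional mechanism, e.g.\ a coarser marker system in which the entire stretch $Md_0MVMd_2M$ is itself a single marked block $MDM$ with long data word $D=d_0MVMd_2$ (this system needs its own overlap verification); commuting $\phi$ past a transposition of two such long data words pins the positions of the inner blocks simultaneously and forces $k_A=k_B$. The paper sidesteps this issue entirely: it works with the periodic points $(Md)^\infty$, $d\in\mathcal{D}(M,n)$, which are all carried to one another by marker automorphisms realizing \emph{every} permutation of their orbits, so the displacement transfers between orbits by conjugation, $\varphi(y)=g_\tau^{-1}\varphi g_\tau(y)=\sigma^j y$; moreover it embeds every word of length $2R+1$ \emph{inside the data words} rather than in the background, so no comparison across different backgrounds ever arises.

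There are two further soft spots. First, the existence of a point with exactly one occurrence of $Md_0M$ and none of $Md_1M$ is not routine in a sofic shift (a transitive point, in particular, contains every allowable word infinitely often): one workable construction is to splice $Md_0M$ into a periodic background $p=(Mv)^\infty$ and, crucially, to choose $d_0,d_1$ among the many available data words of a suitable length $n<|M|$ so that neither $Md_0M$ nor $Md_1M$ occurs in $p$; then non-self-overlap of $M$ rules out spurious occurrences near the splice. (Relatedly, non-self-overlap of $M$ alone does not give the overlap condition for marker automorphisms; you also need the data words not to contain $M$, which is why the paper imposes $n\le|M|$.) Second, your closing density claim is false as stated: points carrying a single isolated marked block are \emph{not} dense, since no point of the cylinder $[Md_0MvMd_0M]$ has a unique block. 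What you actually need---and what the splicing construction can provide---is the weaker statement that every word of length $2R+1$ occurs in some point with a unique marked block; that suffices to conclude that the block code of $\sigma^{-k}\phi$ is the identity on all words of length $2R+1$ and hence that $\phi=\sigma^k$.
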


\begin{proof} Let $(X, \sigma)$ be a transitive sofic shift, and let $\varphi \in \rm{Aut}(X)$ commute with all automorphisms. Suppose $\varphi$ has range $R$. Recall that for transitive sofic shifts, between any two words we can always insert spacers of lengths that form an arithmetic progression, where the difference is $p$, the period of $X$. Using these spacers and the sufficiently long markers produced by Lemma \ref{lem:syncmarkers}, we can find a synchronizing $M \in \mathcal{L}(X)$ and $n \in \mathbb{N}$, with $2R+1 \le n \le |M|$, such that for
$$\mathcal{D}(M,n): = \{d \in \mathcal{L}_n(X): MdM \in \mathcal{L}(X)\},$$
every word of length $2R+1$ appears as a subword of some element of $\mathcal{D}(M,n)$. This can be done by applying the transitive property simultaneously to $M$ and words of length $2R+1$ so that the spacers are of the same length. If necessary we can extend $M$ to the left. Repeat the process on the right to get words of the form $MdM$.
For any permutation $\tau \in \rm{Sym}(\mathcal{D}(M,n))$, let $g_\tau$ denote the marker automorphism induced by $\tau$.

Consider the periodic points of period $|M|+n$ obtained by concatenating $Md$ with itself, for any $d \in \mathcal{D}(M,n)$. We denote such points $\rm{Per}(M,n) \subset \rm{Per}_{|M|+n}$. Let $\rm{Orb}(M,n)$ be the set of distinct $\sigma$-orbits in $\rm{Per}(M,n)$. Note that $|\rm{Orb}(M,n)| \ge 2$, as each word of length $2R+1$ appears in some $d \in \mathcal{D}(M,n)$.

For any permutation of $\rm{Orb}(M,n)$, there is a $g_\tau$, for some $\tau \in \rm{Sym}(\mathcal{D}(M,n))$ whose action on $\rm{Orb}(M,n)$ coincides with the given permutation. In addition, $g_\tau$ acts as the identity on periodic points of period $|M|+n$ which are not in $\rm{Per}(M,n)$.

We claim that $\varphi$ acts on $\rm{Orb}(M,n)$. Suppose not. Then $\varphi$ maps some $x \in \rm{Per}(M,n)$ to a periodic point $y$ not in $\rm{Per}(M,n)$. Since $\varphi$ commutes with all $g_\tau$, this means that all points in $\rm{Per}(M,n)$ are mapped to the $\sigma$-orbit of $y$, which contradicts that $\varphi$ permutes the periodic points of each period.

Now we show that $\varphi$ acts as the identity permutation. Let $x$ and $y \in \rm{Per}(M,n)$ be in distinct $\sigma$-orbits. We note that there exists $\varphi(x) = \sigma^j(x)$ for some $-R \le j \le R$. This equality holds for all points in the $\sigma$-orbit of $x$, and we show that it holds for $y$ as well. Let $g_\tau$ be a permutation that takes $\mathcal{O}(x)$ to $\mathcal{O}(y)$. As $\varphi$ commutes with $g_\tau$, we have
$$\varphi(y) = g_\tau^{-1} \circ \varphi \circ g_\tau (y) = \sigma^j y.$$
As every block of length $2R+1$ appears in some $d \in \mathcal{D}(M,n)$, we conclude $\varphi=\sigma^j.$








\end{proof}

\subsection{Topological boundaries} \label{subsec:boundaries}
Throughout this section, let $G$ be a locally compact group and let $\Omega$ be a compact metric space with a continuous $G$ action $G \times \Omega \to \Omega$: for any $g \in G$ and $\omega \in \Omega$,
$$(g, \omega) = g \cdot \omega.$$
We call $\Omega$ a \emph{$G$-space}. Given $\omega \in \Omega$, let $G\omega$ denote the $G$-orbit of $\omega$:
$$G\omega = \{g \cdot \omega: g \in G\} \subset \Omega$$
and $\overline{G\omega}$ its closure in $\Omega$.

Let $\Prob(\Omega)$ be the set of Borel probability measures on $\Omega$, equipped with the weak-* topology. Since $\Omega$ is compact, $\Prob(\Omega)$ is also a compact metric space. Given $\omega \in \Omega$, let $\delta_\omega$ denote the Dirac measure concentrated at $\omega$. The mapping $\omega \mapsto \delta_\omega$ gives an embedding of $\Omega$ into $\Prob(\Omega)$.

The $G$-action on $\Omega$ induces an action on $\Prob(\Omega)$ by viewing elements of $G$ as self-homeomorphisms of $\Omega$: for any $g \in G$ and $\omega \in \Omega$,
$$g \cdot \mu = \mu \circ g^{-1}.$$

We say that the $G$-action on $\Omega$ is \emph{minimal} if for any $\omega \in \Omega$, the $G$-orbit closure $\overline{G\omega} = \Omega$. The $G$-action on $\Omega$ is \emph{strongly proximal} if for all $\mu \in \Prob(\Omega)$, the $G$-orbit closure $\overline{G\mu} \subset \Prob(\Omega)$ contains a Dirac measure $\delta_\omega$ for some $\omega \in \Omega$. A $G$-space $\Omega$ is a \emph{topological boundary} if the $G$-action on $\Omega$ is minimal and strongly proximal.

The $G$-action on $\Omega$ is \emph{extremely proximal} if $|\Omega| \ge 2$ and for any proper closed set $C \subsetneq \Omega$ and any open set $U \subset \Omega$, there is some $g \in G$ with $gC \subset U$.

It is known that extreme proximality implies strong proximality \cite[\textsection 3]{G2} and the product of strongly proximal actions is again strongly proximal \cite[\textsection 3]{G1}.

A group $G$ is \emph{amenable} if for every compact $G$-space $\Omega$, the $G$-action on $\Prob(\Omega)$ has a fixed point. Examples of amenable groups include abelian groups and finite groups, while the free group is not amenable.

\section{Topological boundaries of the automorphism group of transitive sofic shifts}
\subsection{The action of the automorphism group on left-periodic points} \label{subsec:q_k}
Recall from Section \ref{sec:background} that left-periodic points are not periodic. For any shift $(X, \sigma)$, we define a compact space equipped with an $\rm{Aut}(X)$ action.
\begin{lemma} \label{lem:Q_k}
Let $(X, \sigma)$ be a shift and $k \in \N$. Suppose $X$ contains a left-$k$-periodic point. We denote the set of left-$k$-periodic points up to $k$ by $Q_k$. Then $Q_k$ is an $\Aut(X)$-space, and $\sigma$ acts trivially on $Q_k$. If $X$ contains a left-$k$-periodic point which is transitive, then the kernel of the action is $\langle \sigma \rangle.$
\end{lemma}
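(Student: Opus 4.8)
The plan is to define the $\Aut(X)$-action on $Q_k$ by applying an automorphism and then re-normalizing the transition index back to $k$ with a power of $\sigma$. First I would show that every $g \in \Aut(X)$ sends left-$k$-periodic points to left-$k$-periodic points. If $x$ is left-asymptotic to the $k$-periodic point $p$ (so $x$ and $p$ agree on $(-\infty, N(x))$, where $N(x)$ is the transition index) and $g$ has range $R$, then $gx$ and $gp$ agree on $(-\infty, N(x) - R)$. Since $g$ commutes with $\sigma$, $\sigma^k(gp) = g(\sigma^k p) = gp$, so $gp$ is periodic of period dividing $k$; applying the same reasoning to $g^{-1}$ shows the minimal period cannot drop, so $gp$ has minimal period exactly $k$. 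As $gx$ is non-periodic (because $x$ is and $g$ is bijective), $gx$ is again left-$k$-periodic, and its transition index differs from $N(x)$ by at most $\max(R, R')$, where $R'$ is a range for $g^{-1}$. Because $N(\sigma y) = N(y) - 1$, there is a unique integer $c(g,x)$ with $\sigma^{c(g,x)}(gx) \in Q_k$, and I set $g\cdot x := \sigma^{c(g,x)}(gx)$. The group-action axioms follow from uniqueness of the renormalizing power together with $g\sigma = \sigma g$: both $(g_1 g_2)\cdot x$ and $g_1\cdot(g_2\cdot x)$ are shift-translates of $g_1 g_2 x$ lying in $Q_k$, hence equal.

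To confirm $Q_k$ is a genuine $\Aut(X)$-space I would check it is compact metric and that the action is continuous. Fixing the transition index at $k$ forces the entire left tail to be the length-$k$ block $w = (x_0, \dots, x_{k-1})$ repeated, so the defining conditions reduce to the closed requirement ``$x_n = x_{n-k}$ for $n < k$'' together with the clopen requirements $x_k \neq x_0$ and ``$w$ is primitive'' (minimal period exactly $k$ now constrains only finitely many coordinates). Thus $Q_k$ is closed in $X$, hence compact metric in the subspace topology, and $c(g,\cdot)$ is locally constant, giving continuity of $x\mapsto g\cdot x$. Taking $g=\sigma$: since $N(\sigma x) = k-1$ for $x\in Q_k$, one needs $c(\sigma,x) = -1$, so $\sigma\cdot x = \sigma^{-1}(\sigma x) = x$. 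Hence $\sigma$ acts trivially and $\langle\sigma\rangle$ is contained in the kernel.

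For the reverse inclusion, let $x^{*}$ be a transitive (dense-orbit) left-$k$-periodic point; after replacing it by a shift I may assume $x^{*} \in Q_k$, since shifting changes neither its orbit nor its minimal period. If $g$ lies in the kernel then $g\cdot x^{*} = x^{*}$, which by definition of the action means $gx^{*} = \sigma^{j} x^{*}$ for the single integer $j = -c(g,x^{*})$. Since $g$ commutes with $\sigma$, $g\sigma^m x^{*} = \sigma^{j}\sigma^m x^{*}$ for every $m$, so $g$ and $\sigma^{j}$ agree on the orbit $\mathcal{O}(x^{*})$. Transitivity makes $\mathcal{O}(x^{*})$ dense in $X$, and both maps are continuous, so $g = \sigma^{j}$ on all of $X$; thus $g \in \langle\sigma\rangle$.

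The main obstacle is the well-definedness of the renormalized action, specifically the claim that automorphisms preserve minimal left-$k$-periodicity and move the transition index only boundedly; this is exactly where invertibility of $g$ is essential (a non-invertible block code could collapse the minimal period). Once the action is in place the kernel computation is short, because a transitive point has dense orbit and reduces the global identity $g = \sigma^{j}$ to agreement on a dense set.
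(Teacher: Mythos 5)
Your proposal is correct and takes essentially the same approach as the paper: you define the action by the unique shift renormalization of the transition index (your $c(g,x)$ is exactly the paper's cocycle $\alpha(g,x)$), note that $\sigma$ acts trivially, and use a transitive left-$k$-periodic point to identify the kernel. The only cosmetic difference is the final step, where the paper argues contrapositively by finding, for each $n$, a word on which the block codes of $g$ and $\sigma^n$ disagree and which occurs in the transitive point, whereas you deduce $g=\sigma^j$ from agreement on the dense orbit plus continuity --- the same density fact in topological rather than combinatorial form.
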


\begin{proof}
Since any automorphism is a block map, the set of left-$k$-periodic points is invariant under $\Aut(X)$. The set of all left-$k$-periodic points is precisely $\bigcup_{i \in \Z} \sigma^i Q_k$.
Thus, for any $g \in \Aut(X)$ and $x \in Q_k$,
\begin{equation}\label{eq:cocycle}
    gx \in \sigma^i Q_k,
\end{equation}
for some unique $i$, since the shifts of $Q_k$ are pairwise disjoint.
Define a cocycle $\alpha \colon \Aut(X) \times Q_k \to \Z$ to be:
\begin{equation}
    \alpha(g,x) = -i
\end{equation}
where $i$ is obtained from equation \eqref{eq:cocycle}.
The cocycle condition ensures that the induced $\Aut(X)$-action on $Q_k$ is well-defined, where for $g \in \Aut(X), x \in Q_k$:
\begin{equation}
    g \cdot x = \sigma^{\alpha(g, x)} \circ gx.
\end{equation}

We note here that the action of $\rm{Aut}(X)$ on $X$ is different from the action on $Q_k$, and use different notation to make clear which action we are referencing. For any $x \in Q_k$, $\alpha(\sigma,x) = -1$, so $\sigma \cdot x = x$.

Suppose in addition $x \in X$ is a transitive left-$k$-periodic point. Let $g \notin \langle \sigma \rangle$. For each $n \in \N$, let $R_n$ denote the maximum of $n$ and the range of $g$. Let $\hat{g}_n$ and $\hat{\sigma}^n$ denote the block codes of width $R_n$ that induce $g$ and $\sigma^n$, respectively. Then there exists a word $w_n$ of length $2R_n+1$ such that $\hat{g}_n (w_n) \ne \hat{\sigma}^n(w_n)$. Since every $w_n$ appears in $x$, $g \cdot x \ne x$.
\end{proof}

The set of $k$-periodic points $\rm{Per}_k$ is invariant under $\Aut(X)$. We can decompose $\rm{Per}_k$ into a disjoint union of distinct $\sigma$-orbits:
\begin{equation}\label{eq:Per_k}
    \rm{Per}_k = \mathcal{O}(x^1) \amalg \cdots \amalg \mathcal{O}(x^j).
\end{equation}
Thus, the action of $\Aut(X)$ on $X$ descends to an action on $\rm{Per}_k/\langle \sigma \rangle$.

\begin{lemma}
Let $(X, \sigma)$ be a shift that contains a left-$k$-periodic point, and $Q_k$ be the set of left-$k$-periodic points up to $k$. There exists a projection
\begin{equation}
\pi \colon Q_k \to \rm{Per}_k/\langle \sigma \rangle
\end{equation}
which is $\Aut(X)$-equivariant.
\end{lemma}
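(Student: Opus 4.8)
The plan is to define the projection $\pi$ by mapping a left-$k$-periodic point to the $\sigma$-orbit of the periodic point determined by its left tail. Given $x \in Q_k$, the point $x$ agrees with some genuinely $k$-periodic point on the half-line $\{n : n < N\}$, where $N$ is the left-periodic index. Concretely, since $x_n = x_{n-k}$ for all $n < N$, the word $x_{[N-k, N-1]}$ determines a $k$-periodic point $\wh{x} \in \rm{Per}_k$ by declaring $\wh{x}_n = x_n$ for $N-k \le n < N$ and extending periodically in both directions. I would set $\pi(x) := \mathcal{O}(\wh{x}) \in \rm{Per}_k/\langle \sigma \rangle$. The first step is to verify this is well-defined: the periodic point $\wh{x}$ is canonically attached to the left tail of $x$, so $\pi(x)$ does not depend on any choices, and $\wh{x}$ genuinely has period $k$ (not a proper divisor) because $x$ is \emph{left-$k$-periodic} with $k$ minimal.

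The heart of the argument is checking $\Aut(X)$-equivariance, that is, $\pi(g \cdot x) = g \cdot \pi(x)$ for all $g \in \Aut(X)$ and $x \in Q_k$, where the left action on $Q_k$ is the one defined in Lemma \ref{lem:Q_k} via the cocycle $\alpha$, and the action on $\rm{Per}_k/\langle \sigma \rangle$ is the descent of the $\Aut(X)$-action on $X$. The key observation is that automorphisms are block codes of finite range, so $g$ is a sliding block code with some range $R$. Because $gx$ and $x$ differ only in how the block code reads across the "seam" near index $N$, and the block code is local, applying $g$ to $x$ sends the left tail of $x$ to the left tail of $gx$ in a way governed entirely by $g$'s action on the periodic point $\wh{x}$. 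More precisely, for indices sufficiently far to the left of $N$, the value $(gx)_n$ depends only on $x_{[n-R, n+R]}$, which lies entirely in the periodic part and hence equals $\wh{x}_{[n-R,n+R]}$; therefore $(gx)_n = (g\wh{x})_n$ for all such $n$. This shows $\widehat{gx}$ and $g\wh{x}$ agree on a left half-line, so they lie in the same $\sigma$-orbit.

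The step I expect to be the main obstacle is reconciling the shift-normalization built into the $Q_k$ action with the $\sigma$-orbit quotient on the target. The $\Aut(X)$-action on $Q_k$ is $g \cdot x = \sigma^{\alpha(g,x)} \circ gx$, where the shift correction $\sigma^{\alpha(g,x)}$ is applied precisely to bring $gx$ back into the slice $Q_k$ (i.e., to re-normalize the left-periodic index to lie in $[0, k)$, or however the representative convention is fixed). I must check that this shift correction is exactly absorbed by passing to the quotient $\rm{Per}_k/\langle \sigma \rangle$: since $\pi(\sigma^i y) = \sigma^i \cdot \pi(y) = \pi(y)$ in the quotient for any $i$, the cocycle term $\sigma^{\alpha(g,x)}$ becomes invisible after projecting. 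Thus $\pi(g \cdot x) = \pi(\sigma^{\alpha(g,x)} gx) = \pi(gx) = \mathcal{O}(g\wh{x}) = g \cdot \mathcal{O}(\wh{x}) = g \cdot \pi(x)$, where the middle equality uses the block-code argument above and the $\sigma$-invariance of $\pi$ on the target. Verifying that the seam contributes nothing asymptotically, and that $\wh{gx}$ is computed from the correct left tail after the shift correction, is the bookkeeping that requires care but should follow directly from the finite range of $g$ and the minimality of the period $k$.
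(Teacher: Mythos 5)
Your proposal is correct and takes essentially the same approach as the paper: you send $x \in Q_k$ to the $\sigma$-orbit of the unique $k$-periodic point matching the left tail of $x$, and you deduce equivariance from the finite range of block codes, with the cocycle correction $\sigma^{\alpha(g,x)}$ killed by passing to the quotient by $\langle \sigma \rangle$. The paper's proof is a terse three-sentence version of exactly this argument; your additional bookkeeping (minimality of the period $k$, the seam/range analysis, and the absorption of the shift normalization) simply makes explicit what the paper leaves implicit.
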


\begin{proof}
Given $x \in Q_k$, there exists a unique $k$-periodic point such that $y_n=x_n$ for all $n<k$. Define the projection $\pi$ which sends $x$ to the $\sigma$-orbit in $\rm{Per}_k$ containing $y$.
Since any automorphism is a block code, for any $x \in Q_k$ and $g \in \Aut(X)$, $\pi(gx) = g \pi (x)$.
\end{proof}

A map $s \colon \rm{Per}_k/\langle \sigma \rangle  \to Q_k$ is a section of the projection $\pi$ if $s$ is a right inverse of $\pi$. Let $\Omega$ be the collection of all sections $s \colon \rm{Per}_k \to Q_k$ of the projection $\pi$. Since $\pi$ is equivariant, the action of $\Aut(X)$ on $Q_k$ induces an action on $\Omega$: for any $g \in G$ and $s \in \Omega$,
$$gs = g \cdot (s \circ g^{-1})$$\label{par:omega}
where we view $g^{-1}$ as the permutation of $\rm{Per}_k/\langle \sigma \rangle$ induced by $g^{-1}$.

Let $\{\Omega^m\}$ denote the fibers of $\pi \colon \Omega \to \rm{Per}_k/\langle \sigma \rangle$. Given a periodic orbit $\mathcal{O}(x^m) \in \rm{Per}_k/\langle \sigma \rangle$,
\begin{equation}\label{eq:omega^m}
    \Omega^m = \{x \in \Omega:\exists i\in \Z \text{ with }x_n=x^m_{n-i} \text{ for all } n<k\}
\end{equation}

Let $N \triangleleft \Aut(X)$ be the normal subgroup given by the kernel of $\pi$. Since $N$ preserves the fibers $\{\Omega^m\}$, the restriction of the action on $\Omega$ to $N$ is isomorphic to the diagonal action of $N$ on the product of the fibers $\prod_{m=1}^j \Omega^m$, where $j$ is the number of distinct $k$-periodic orbits defined in \eqref{eq:Per_k}.

\subsection{Extreme Proximality} \label{subsec:extreme_proximality}
Given a bi-infinite sequence $x \in X$, we say that $x$ is \emph{synchronizing} if all sufficiently long words that appear in $x$ are synchronizing. If $x$ is periodic and some synchronizing word appears in $x$, then $x$ itself is synchronizing. Since the definition only depends on words that appear in $x$, a sequence is synchronizing if and only if any point in its orbit closure is synchronizing.

Every periodic orbit $\mathcal{O}(x)$ is exactly one of the three following types:
\begin{types}
    \item $x$ is synchronizing \label{type:type1}
    \item $x$ is not synchronizing, but there exists an automorphism $h \in \rm{Aut}(X)$ such that $hx$ is synchronizing \label{type:type2}
    \item $x$ is not synchronizing, and for all automorphisms $h \in \rm{Aut}(X)$, $hx$ is not synchronizing. \label{type:type3}
\end{types}

\begin{remark}
When $X$ is an SFT, all sequences are synchronizing and $\rm{Per}_k$ consists only of synchronizing points.
\end{remark}

In general, the action of $\rm{Aut}(X)$ on $\rm{Per}_k$ may not be transitive. In the case of SFTs, however, for sufficiently large $k$, $\rm{Aut}(X)$ does act on $\rm{Per}_k$ transitively (see \cite{BLR}). The proof constructs a composition of marker automorphisms which permute periodic points with disjoint orbits, building on work by Boyle and Krieger \cite{BK} for the full shift. The same proof shows that for a transitive sofic shift, $\rm{Aut}(X)$ acts transitively on the synchronizing points in $\rm{Per}_k$. However, non-synchronizing points do not contain   any synchronizing subwords, so they are fixed by all marker automorphisms.

Let $\rm{Syn}_k \subset \rm{Per}_k$ be the subset of periodic points of \cref{type:type1} and \cref{type:type2}. Then $\rm{Syn}_k$ is the $\Aut(X)$-orbit under $\Aut(X)$ of any synchronizing point, so $\Aut(X)$ must act transitively on $\rm{Syn}_k$.

\begin{remark}
It is possible that no periodic points of \cref{type:type2} exist. We do not know if there exist automorphisms which do not fix non-synchronizing points.
\end{remark}

Define $\widetilde{\Omega}$ analogously to $\Omega$ in Section \ref{subsec:q_k}, but for the restricted action. For any section $s \in \Omega$, where $s:\rm{Per}_k/\langle \sigma \rangle \to Q_k$, let $s|_{\rm{Syn}_k/\langle \sigma \rangle}: \rm{Syn}_k/\langle \sigma \rangle \to Q_k$ be the corresponding element in $\widetilde{\Omega}$. Note that this is not injective and many $s \in \Omega$ project to the same map in $\widetilde{\Omega}$.

We now show that $\widetilde{\Omega}$ is a topological boundary for $\rm{Aut}(X)$. Recall that $N$, the kernel of the action of $\rm{Per}_k$, acts on each $\Omega^{m}$. If we consider the action of $N$ on $\widetilde{\Omega}$,

$$\widetilde{\Omega} \cong \prod_{m} \Omega^m$$

where the product is taken over values of $m$ where $x^m$ is of \cref{type:type1} or \cref{type:type2}.

We show that the action of $N$ on $\Omega^m$ is extremely proximal, and use this to prove that the full action of $\rm{Aut}(X)$ on $\widetilde{\Omega}$ is a topological boundary. The key step is constructing marker automorphisms in $\Omega^m$, where $x^m$ is a synchronizing point. Then for non-synchronizing points, we exploit the fact that such points are in the $\Aut(X)$-orbit of some synchronizing point to achieve the same result.

The set $\Omega^m$ is closed in $X$, so cylinder sets of the form 
\begin{equation}
    [w]^m:= [w] \cap \Omega^m, \text{ where } w \in \mathcal{L}(X)
\end{equation}
form a subbase that generates the subspace topology on $\Omega^m$.

\begin{proposition} \label{proposition:prop1}
Let $(X, \sigma)$ be a transitive sofic shift and $k \in \N$. Suppose $X$ contains a left-$k$-periodic point. Fix $x^m \in \rm{Syn}_k$, and define $\Omega^m$ as in \eqref{eq:omega^m}.
Let $w,u$ be words in $\mathcal{L}(X)$ such that the corresponding cylinder sets $[w]^m$ and $[u]^m$ in $\Omega^m$ are nonempty and proper. Then there is an automorphism $g \in \Aut(X)$ which acts as the identity on $\rm{Per}_k/\langle \sigma \rangle$ and satisfies $g \cdot [w]^m \subset [u]^m$.
\end{proposition}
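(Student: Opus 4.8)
The plan is to produce $g$ as a marker automorphism whose left marker lives inside the synchronizing periodic tail shared by all points of $\Omega^m$, and whose data rewrite carries $w$ to $u$ while fixing every coordinate $n<k$, so that no cocycle twist is incurred and $g$ restricts to the identity on $\mathrm{Per}_k/\langle \sigma \rangle$. First I would reduce to the case that $x^m$ is of \cref{type:type1}, i.e.\ genuinely synchronizing. If instead $x^m$ is of \cref{type:type2}, choose $h \in \Aut(X)$ with $hx^m$ synchronizing; then $h$ carries the fiber $\Omega^m$ onto $\Omega^{m'}$ with $x^{m'}=hx^m \in \mathrm{Syn}_k$, and if an automorphism $g'$ solves the problem on $\Omega^{m'}$ for the (clopen) images of $[w]^m$ and $[u]^m$ while fixing $\mathrm{Per}_k/\langle\sigma\rangle$, then $g:=h^{-1}g'h$ again fixes $\mathrm{Per}_k/\langle\sigma\rangle$ (the outer conjugation cancels on orbit classes) and satisfies $g\cdot[w]^m\subset[u]^m$. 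So I may assume $x^m$ is synchronizing.

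For synchronizing $x^m$, its periodic block contains a synchronizing word, and by \cref{lem:syncmarkers} I can pick a long synchronizing word $M$ that does not overlap itself. Since the tail of every point of $\Omega^m$ is periodic with the pattern of $x^m$, a copy of $M$ occurs at a controlled negative position to the left of coordinate $0$, and this is the left marker $M_\ell$. Because $[w]^m$ and $[u]^m$ are nonempty, $w$ and $u$ are allowable after this tail; using \cref{prop:synchronizing} together with the transitive-sofic spacer property (spacers whose lengths form an arithmetic progression), I would extend $w$ and $u$ on the right to equal-length words $w'=w\alpha S$ and $u'=u\beta S$ terminating in a common synchronizing word $S$. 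The point of the common synchronizing suffix is that $w'$ and $u'$ then have \emph{identical follower sets} in $X$, so that overwriting $w'$ by $u'$ in the window just to the right of $M_\ell$ never creates a forbidden word on the right, while synchronicity of $M_\ell$ controls the left. I then let $\mathcal{D}$ be the data words carried between $M_\ell$ and the terminal $S$, take $\tau$ to be the involution swapping the $w'$-datum with the $u'$-datum (fixing all others), and let $g$ be the induced marker automorphism. The non-self-overlap of $M$ and the bound on marker overlaps give the required overlap condition, so $g \in \Aut(X)$; and since the rewrite touches only coordinates in the aperiodic region it fixes each coordinate $n<k$, whence $\alpha(g,x)=0$ in \eqref{eq:cocycle}, $g$ preserves $\Omega^m$ as in \eqref{eq:omega^m}, and $g$ acts as the identity on $\mathrm{Per}_k/\langle\sigma\rangle$.

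I expect the main obstacle to be covering \emph{every} point of $[w]^m$ at once: a single marker automorphism alters only configurations exhibiting its markers at the prescribed spacing, whereas a point of $[w]^m$ may continue to the right in any admissible way. The decisive step is therefore the follower-set matching. After replacing $w,u$ by the synchronizing-terminated extensions $w',u'$ with equal follower sets, the rewrite may be keyed essentially to the left marker $M_\ell$ alone — the common suffix $S$ being used only to certify that $w'$ and $u'$ share a follower set — and it becomes transparent to every admissible continuation, so that all of $[w]^m$ is swept into $[u]^m$. One must still check that the extension can be performed at equal lengths (via the arithmetic-progression spacers) and that the resulting $\tau$ is a genuine permutation of $\mathcal{D}$. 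The concluding verification, namely $g\cdot[w]^m\subset[u]^m$ in the twisted action on $\Omega^m$, is then a direct computation using that $g$ fixes the periodic tail and sends $w'$ to $u'$.
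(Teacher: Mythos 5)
Your reduction of the non-synchronizing case by conjugation matches the paper's Case 2, but the core construction for synchronizing $x^m$ has two genuine gaps, one of which is fatal to the whole design. You insist that the rewrite fix every coordinate $n<k$, so that $\alpha(g,\cdot)=0$ in \eqref{eq:cocycle}. This is impossible in the stated generality: all points of $[w]^m$ and $[u]^m$ lie in $Q_k$, so their initial $k$-blocks are cyclic rotations of the period block of $x^m$, and $w$ and $u$ may begin with \emph{different} rotations. Already in the full shift on $\{0,1\}$ with $k=2$ and tail pattern $(01)^\infty$, take $w=011$ and $u=100$: both cylinders are nonempty and proper, but for any $g$ with $(gy)_n=y_n$ for $n<k$ and $\alpha(g,y)=0$ we get $(g\cdot y)_0=(gy)_0=y_0=0\ne 1=u_0$, so $g\cdot[w]^m\not\subset[u]^m$. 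The cocycle twist is not an inconvenience to be engineered away; it is the mechanism. The paper's automorphism never rewrites at or to the right of the origin: it takes left marker $a^r$, \emph{right marker} $a^rw$, and data words $\{a^{rp},\widetilde u v\}$, where $\widetilde u$ extends $u$ on the left to match the phase of $w$. The rewrite replaces a block of the periodic tail far to the left of $0$, which relocates the index at which periodicity breaks, and it is precisely the normalizing shift $\sigma^{\alpha(g,y)}$ in the twisted action that places $u$ at the origin. Making $w$ itself part of the right marker also disposes of your ``main obstacle'' for free: nothing to the right of $w$ is ever touched, so no follower-set matching is needed to sweep all of $[w]^m$ into $[u]^m$.

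Second, the marker scaffolding you propose does not exist. A copy of the word $M$ produced by Lemma \ref{lem:syncmarkers} need not occur in the tail of points of $\Omega^m$, and if it is long it \emph{cannot}: any word of length greater than $k$ occurring in a $k$-periodic tail overlaps itself nontrivially (shift by $k$), so ``long, synchronizing, non-self-overlapping'' and ``occurs at a controlled position in every point of $\Omega^m$'' are mutually exclusive. The paper instead uses the self-overlapping marker $a^r$ and controls overlaps through the special-word structure, exploiting that $a$ cannot overlap itself (minimality of $k$) and that $w$ and $\widetilde u$ break periodicity exactly at position $k$. Relatedly, your rewrite ``keyed essentially to the left marker alone'' is not a marker automorphism in the paper's sense: with an empty right marker, the overlap condition forbids \emph{all} nontrivial overlaps of special words, which you do not verify and which can fail (for instance when a suffix of $w'$ is a prefix of the marker); whereas if $S$ really serves as the right marker, then any point of $[w]^m$ whose continuation does not display $S$ at the prescribed distance is fixed by $g$ and never enters $[u]^m$. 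Your follower-set observation correctly handles admissibility of the rewritten sequences, but admissibility is not the hard part; well-definedness of the induced map and coverage of all of $[w]^m$ are, and those are exactly what the paper's choice of markers is built to solve.
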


\begin{proof}
We fix $x^m \in \rm{Syn}_k$, so either $x^m$ is of \cref{type:type1} or \cref{type:type2}.\\
\textbf{Case 1:} $x^m$ is synchronizing. \\
There exists $c \in \mathbb{N}$ such that all subwords of length at least $c$ are synchronizing. 

Since points in $\Omega^m$ are left-$k$-periodic up to $k$, if $|w| \le k$, we can replace it with a word $\widetilde{w}$ of length $k+1$ such that $[w]^m = [\widetilde{w}]^m$. We may assume that $|w|,|u| >k$. Write $a = w_{[0, \dots, k-1]}$. If $w[0] \ne u[0]$, let $\widetilde u$ be the unique extension of $u$ so that $w$ and $\widetilde u$ begin with the same letter.

We can assume that $\widetilde{u}$ does not appear as the initial word of $w$, otherwise $u = \widetilde{u}$ and the identity automorphism satisfies the conclusion of lemma. We first deal with the case that $w$ does not appear as the initial word of $\widetilde{u}$.

Let $a^r$ be the word obtained by concatenating $r$ copies of $a$. Since it is a word of at least length $c$ which appears in $x^m$, it must be synchronizing. As $[w]^m \ne \emptyset$, $a^rw$ is allowable, and must also be synchronizing. Since $X$ is transitive, choose $v \in \mathcal{L}(X)$ such that $\widetilde uva^r$ is an allowable word. Since $a$ and $\widetilde u$ begin with the same initial word, we can choose $v$ with $|\widetilde uv| = rpk$ for some $r \ge \max\{|w|,c\}$, where $p$ is the greatest common divisor of $k$ with $\rm{Per}_k \ne \emptyset$. Set $a^r$ to be the left marker, $a^rw$ to be the right marker, and $\mathcal{D} = \{a^{rp}, \widetilde uv\}$ to be data words. 

To show these markers induce a well-defined marker automorphism, it suffices to check that special words of the form $a^rda^rw$, for $a^rd \in \mathcal{D}$, satisfy the overlap condition given in the definition of marker automorphism. The word $a$ is a word of length $k$ which appears in $x^m$, a $k$-periodic point. Since $k$ is minimal, $a$ cannot overlap itself nontrivially.

The initial word of $w$ and $\widetilde u$ is $a$, while $a$ does not occur at position $k$ in $w$ or $\widetilde u$. Thus, if the special words $a^ra^{rp}a^rw$ and $a^r\widetilde uva^rw$ overlap nontrivially, the length of the overlap must be either less than $|w|$, or exactly $rk+|w|$. In the second case, the special blocks would overlap by $a^r$. However, this overlap would force $\widetilde{u}$ to be the initial word of $w$, which contradicts the assumption.

Because special words begin with $a^r$, a similar argument shows that a special word can only overlap with itself nontrivially by at most $|w|$. By the choice of $r$, $|a^r| \ge |w|$, so the marker automorphism $g$ determined by the nontrivial permutation on $\mathcal{D}$ is well-defined.

Let $x^i \in \rm{Per}_k$. Since no special words appear, $g$ acts as the identity on $x^i$, so $g$ is in the kernel of the action on $\rm{Per}_k/\langle \sigma \rangle$.

Lastly, we show that $g \cdot [w]^m \subset [u]^m$. Let $y \in [w]^m$. Since 
$y$ is left-$k$-periodic, the first occurrence of a special word in $y$ is $a^ra^{rp}a^rw$ at $-(2r+rp)k$. Thus, 
$$gy = \cdots a\widetilde uv.w \cdots.$$
Applying the cocycle $\alpha$ gives 
$$g \cdot y = \cdots \widetilde a. uvw \cdots \in [u]^m$$
where $\widetilde a$ is the initial $k$-block of $u$.

Suppose now $w$ is the initial word of $\widetilde{u}$. We can partition $[w]^m$ by the finitely many allowable extensions of $w$ given by $wb$, where each $wb$ is of length $|\widetilde{u}|$. Applying the process above gives marker automorphisms $g_b$ for each extension $wb$. Since the end markers for each $g_b$ are distinct, they commute. The composition of $\{g_b\}$ is well-defined, and is a finite order automorphism that maps $[w]^m$ into $[u]^m$.

\textbf{Case 2:} $x^m$ is not synchronizing, and there is an $h \in \rm{Aut}(X)$ such that $hx^m$ is synchronizing. \\
Let $h \in \rm{Aut}(X)$ where $hx^m = x^i$ is synchronizing, and let $[w]^m,[u]^m$ satisfy the hypothesis. Consider the sets
$$h[w]^m \text{ and } h[u]^m.$$
We can partition them into finitely many cylinder sets, so by the previous construction, there is some $g \in N$ that maps
$$g \cdot h[w]^m \subset h[u]^m.$$
Then
$$h^{-1}gh \cdot [w]^m \subset [u]^m$$
with $h^{-1}gh \in N$, as desired.
\end{proof}

Recall that $N \triangleleft \Aut(X)$ is the kernel of $\pi:\Omega \to \rm{Per}_k/\langle \sigma \rangle$, so the automorphism produced above is contained in $N$.

\begin{corollary}\label{cor}
Let $(X, \sigma)$ be a transitive sofic shift and $k \in \N$. Suppose that $X$ contains a left-$k$-periodic point. Then the following hold:
\begin{enumerate}[label={(\arabic*)},ref={\thecorollary~(\arabic*)}]
\item The action of $N$ on $\Omega^m$, as defined in \eqref{eq:omega^m}, is minimal. \label{partone}
\item The action of $\Aut(X)$ on $\Omega$, as defined in Section \ref{par:omega} is minimal. \label{parttwo}
\item The $N$ action on $\Omega^m$ is extremely proximal. \label{partthree}
\item The $\Aut(X)$ action on $\Omega$ is strongly proximal. \label{partfour}
\end{enumerate}
\end{corollary}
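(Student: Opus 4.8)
Statements (1) and (3) are fiberwise, concerning a single synchronizing fiber $\Omega^m$ and the normal subgroup $N$, whereas (2) and (4) are global, concerning $\widetilde{\Omega}\cong\prod_m\Omega^m$ and $\Aut(X)$. The plan is to read (1) and (3) directly off Proposition \ref{proposition:prop1}, then assemble (4) from the fiberwise result using the two cited permanence facts (extreme proximality implies strong proximality, and strong proximality passes to products), and finally obtain (2) from a single combined marker automorphism. Throughout I use that each $\Omega^m$ is compact and zero-dimensional, with the clopen cylinders $[w]^m$ as a basis, and that $|\Omega^m|\ge 2$ because $X$ is infinite and transitive (so every nonempty basic open set is contained in a proper one).

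For (1): given $\omega\in\Omega^m$ and a nonempty open $U$, choose a nonempty proper cylinder $[u]^m\subseteq U$ and a proper cylinder $[w]^m\ni\omega$ (a sufficiently long initial word of $\omega$). Proposition \ref{proposition:prop1} yields $g\in N$ with $g\cdot[w]^m\subseteq[u]^m$, whence $g\cdot\omega\in U$; thus every $N$-orbit is dense and the action is minimal.

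For (3): I would first reduce extreme proximality to the claim that for every nonempty proper cylinder $[w]^m$ and nonempty open $U$ there is $g\in N$ with $g\cdot(\Omega^m\setminus[w]^m)\subseteq U$. Indeed, any proper closed $C$ is disjoint from some cylinder $[w]^m$ (its complement is open and nonempty), so $C\subseteq\Omega^m\setminus[w]^m$ and then $g\cdot C\subseteq U$. Since $\Omega^m$ is zero-dimensional, $\Omega^m\setminus[w]^m$ is a finite disjoint union of cylinders $[w_1]^m,\dots,[w_r]^m$; fixing a nonempty proper $[u]^m\subseteq U$, Proposition \ref{proposition:prop1} sends each $[w_i]^m$ into $[u]^m$. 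As the $[w_i]^m$ are distinguished by their initial words, these rewritings can be realized by one marker automorphism whose block code reads the initial word and acts accordingly, exactly as a prescribed permutation is realized by a single marker automorphism in the proof of the generalized Ryan's theorem above. This $g\in N$ contracts $\Omega^m\setminus[w]^m$ into $U$, so the action is extremely proximal.

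For (4) and (2): by (3) and the cited implication, $N$ acts strongly proximally on each synchronizing $\Omega^m$; since $\widetilde{\Omega}\cong\prod_m\Omega^m$ as $N$-spaces and strong proximality is stable under products, $N$ acts strongly proximally on $\widetilde{\Omega}$, and as $N\le\Aut(X)$ so does $\Aut(X)$, giving (4). Statement (2) is the main obstacle, because a product of minimal actions need not be minimal — for instance the diagonal is an invariant proper closed subset of the square of a boundary — so (1) does not formally give minimality of $\prod_m\Omega^m$, and one must move all finitely many coordinates into prescribed targets $[u^{(m)}]^m$ simultaneously. What breaks this rigidity is that the left-periodic words $a^{(m)}=x^m_{[0,k-1]}$ of distinct orbits are distinct synchronizing words, so the special words triggering the fiber-$m$ automorphism of Proposition \ref{proposition:prop1} cannot occur in the left-periodic part of a point of another fiber $\Omega^l$. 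I would therefore superimpose the per-fiber automorphisms into a single $g\in N$, choosing the marker lengths (and, if needed, the order of composition) so that every rewriting done for one coordinate takes place strictly beyond the finite initial segments defining the targets of the others; then $g$ carries a given section into $\prod_m[u^{(m)}]^m$, so the $N$-orbit — hence the $\Aut(X)$-orbit — is dense and the action is minimal. The delicate step, and the crux of the whole corollary, is exactly this verification that the marker automorphisms attached to the distinct words $a^{(m)}$ can be combined without disturbing the finitely many initial segments that determine the target cylinders; everything else is a direct appeal to Proposition \ref{proposition:prop1} or a formal consequence of the cited properties of strong proximality.
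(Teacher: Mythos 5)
Your parts (1), (3) and (4) run along essentially the same lines as the paper's proof: (1) is read directly off Proposition \ref{proposition:prop1} (the paper additionally uses that the marker automorphism is an involution to show the orbit of an open set covers $\Omega^m$, but your direct density argument does the same job); (3) covers a proper closed set by finitely many proper cylinders and invokes Proposition \ref{proposition:prop1}; and (4) is exactly the paper's chain: extreme proximality implies strong proximality, strong proximality passes to products, and the $\Aut(X)$-action inherits it from the $N$-action. In (3) the paper silently passes from ``each cylinder of the cover can be sent into $[u]^m$'' to ``the whole cover can, by a single automorphism,'' whereas you make this combination step explicit by merging the marker rewritings; that merging is the same device the paper itself uses at the end of the proof of Proposition \ref{proposition:prop1}, where commuting marker automorphisms with distinct right markers are composed, so here you are if anything more careful than the paper. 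The genuine divergence is in (2). The paper argues by cases: if the open set $U$ meets every fiber, apply (1); if $U$ lies inside a single $\Omega^m$, use transitivity of $\Aut(X)$ on $\mathrm{Syn}_k$ to spread $U$ across the fibers and then apply (1). As written this treats $\Omega$ as a disjoint union of the fibers; for the section space $\Omega \cong \prod_m \Omega^m$ with the diagonal action, fiberwise minimality does not formally give minimality of the product --- precisely the objection you raise. Your fix --- superimposing the per-fiber automorphisms of Proposition \ref{proposition:prop1} with marker length chosen so large that the special words of the fiber-$l$ automorphism can only occur, in a point of a different fiber $\Omega^m$, far to the right of the coordinates constrained by the source and target cylinders (a long run of the period word of $x^l$ cannot sit inside the $x^m$-periodic tail, by minimality of the period $k$) --- is sound, and it buys two things the paper's argument does not: minimality of the $N$-action on the product itself, and independence from transitivity of $\Aut(X)$ on $\mathrm{Syn}_k$, which the paper's second case needs. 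The cost is the non-interference verification you correctly isolate as the crux; it is of the same nature as the overlap analysis already carried out inside Proposition \ref{proposition:prop1}, and your sketch of it is the right one.
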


\begin{proof}
(1) It suffices to show that the $N$-orbits of any nonempty open subset $U$ covers all of $\Omega^m$. Let $[u]^m \subset U$ be a nonempty cylinder set with $|u| \ge k$ and $[w]^m$ be an nonempty cylinder with $|w| \ge k$. By Proposition \ref{proposition:prop1}, there exists $g \in N$ such that $g \cdot [w]^m \subset [u]^m$. Since $g$ is defined by an inversion, $g = g^{-1}$, and so each  $[w]^m \subset g \cdot [u]^m$. As $[u]^m$ was arbitrary, this shows that $\bigcup_{g \in N} g \cdot U = \Omega^m$. \\
(2) Let $U \subset \Omega$ be an open set. If the intersections $U \cap \Omega^m$ are all nonempty, then by part (1), the $N$-orbit of $U$ covers $\Omega$.
Suppose $U$ is contained in some $\Omega^m$. The action of $\Aut(X)$ on $\rm{Syn}_k$ is transitive, so there exists $g \in \Aut(X)$ such that $gU \cap \Omega_n$ is nonempty for any $\Omega_n$. By part (1), the $\Aut(X)$-orbit of $U$ covers $ \Omega$.\\
(3) Each $\Omega^m$ contains more than two points, and cylinder sets form a subbase that generates the topology on $\Omega^m$. In addition, $\Omega^m$ is compact, so any closed set is covered by finitely many cylinder sets. By Proposition \ref{proposition:prop1}, the $N$ action on each $\Omega^m$ is extremely proximal. \\
(4) As extremely proximal actions are also strongly proximal, by part (3), the $N$ action on each $\Omega^m$ is strongly proximal. Thus, the product action of $N$ on $\prod_{m=1}^j \Omega^m$ is also strongly proximal. Since the diagonal action of $N$ on the product space is isomorphic (as continuous group actions) on $\Omega$, it follows that the action of $\Aut(X)$, which contains $N$, on $\Omega$ is also strongly proximal.
\end{proof}

We use a proposition of Furman which relates the kernel of boundary actions and normal amenable subgroups:

\begin{proposition}[Furman~\cite{FUR}]\label{proposition:furman}
Let $G$ be a discrete group, and consider the following subgroups of $G$:
\begin{enumerate}
\item $N = \bigcap_{i \in I} Ker(G \to Homeo(X_i))$, where $I$ is the set of isomorphism classes of boundary actions on the set of $G$-spaces,
\item $\sqrt{G}$, the group generated by all closed normal amenable subgroups in $G$.
\end{enumerate}

Then $N = \sqrt{G}$.
\end{proposition}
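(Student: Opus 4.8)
The plan is to prove the two inclusions $\sqrt{G} \subseteq N$ and $N \subseteq \sqrt{G}$ separately. First observe that each $\operatorname{Ker}(G \to \operatorname{Homeo}(X_i))$ is the kernel of a homomorphism and hence normal, so their intersection $N$ is a normal subgroup of $G$. Consequently, for the inclusion $N \subseteq \sqrt{G}$ it suffices to prove that $N$ is \emph{amenable}: since $G$ is discrete, $N$ is automatically closed, and an amenable normal closed subgroup lies in the group $\sqrt{G}$ that all such subgroups generate. Thus the whole statement reduces to two claims: (A) every normal amenable subgroup acts trivially on every boundary, and (B) the subgroup $N$ is amenable.

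For claim (A), I would fix a normal amenable subgroup $A \triangleleft G$ and an arbitrary boundary $\Omega$, and show $A$ acts trivially; since the elements acting trivially form the kernel, a subgroup, this yields $\sqrt{G} \subseteq \operatorname{Ker}(G \to \operatorname{Homeo}(\Omega))$ for every $\Omega$, hence $\sqrt{G}\subseteq N$. Because $A$ is amenable and $\Omega$ is compact, the set $M \subset \Prob(\Omega)$ of $A$-invariant measures is nonempty; it is also closed, convex, and $G$-invariant, the last point using normality: for $\mu \in M$, $g \in G$, and $a \in A$ one has $a \cdot (g\mu) = g\cdot\big((g^{-1}ag)\mu\big) = g\mu$ since $g^{-1}ag \in A$ fixes $\mu$. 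Picking any $\mu \in M$, strong proximality gives a Dirac mass $\delta_\omega \in \overline{G\mu}$, and since $M$ is closed and $G$-invariant we get $\delta_\omega \in M$, i.e. $A$ fixes $\omega$. Then $\operatorname{Fix}(A) \subset \Omega$ is nonempty, closed, and $G$-invariant (again by normality), so minimality forces $\operatorname{Fix}(A) = \Omega$; that is, $A$ acts trivially.

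For claim (B) I would argue by contradiction, and this is the heart of the matter. Suppose $N$ is not amenable. Then $N$ admits a nontrivial Furstenberg boundary $\partial_F N$, on which $N$ acts minimally, strongly proximally, and nontrivially (after passing to a metrizable quotient boundary if necessary, which does not affect the argument). The key step — which I expect to be the main obstacle — is to \emph{induce} this $N$-boundary to a $G$-boundary. Since $N \triangleleft G$, each conjugation automorphism $c_g \colon N \to N$ induces, by the functoriality/universality of the Furstenberg boundary, a homeomorphism of $\partial_F N$ equivariant with respect to $c_g$; assembling these with the $N$-action produces a genuine $G$-action on $\partial_F N$ extending the $N$-action. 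This $G$-action is still minimal and strongly proximal, since these properties are inherited upward (every $G$-orbit contains an $N$-orbit, and $\overline{G\mu} \supseteq \overline{N\mu}$), so $\partial_F N$ is a $G$-boundary. But then $N \subseteq N \subseteq \operatorname{Ker}(G \to \operatorname{Homeo}(\partial_F N))$ forces $N$ to act trivially on $\partial_F N$, and minimality collapses it to a point, contradicting nontriviality. Hence $N$ is amenable, giving $N \subseteq \sqrt{G}$ and, together with claim (A), the equality $N = \sqrt{G}$. The delicate point to verify carefully is that the conjugation-induced homeomorphisms genuinely assemble into a well-defined continuous $G$-action compatible with the $N$-action — the cocycle bookkeeping of the induced boundary — together with the metrizability reduction needed to stay within the paper's definition of a boundary.
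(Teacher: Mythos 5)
The paper offers no proof of this proposition to compare against: it is quoted from Furman \cite{FUR} and used as a black box (the only consequence the paper extracts is that the kernel of any boundary action contains every normal amenable subgroup). Judged on its own, your decomposition into (A) every closed normal amenable subgroup lies in every kernel, and (B) $N$ itself is amenable, is the standard one, and your proof of (A) is complete and correct: amenability of $A$ yields an $A$-invariant measure, normality makes the set $M$ of such measures $G$-invariant, strong proximality places a Dirac measure in $\overline{G\mu} \subseteq M$, and normality plus minimality inflate the nonempty closed $G$-invariant set $\operatorname{Fix}(A)$ to all of $\Omega$.

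Part (B), the real content, follows the route known in the literature, but the two steps you yourself flag as delicate are genuine gaps as written, though both are fillable. First, to make $g \mapsto \widetilde{c_g}$ a homomorphism that restricts to the given $N$-action, you need \emph{uniqueness}, not just existence, of $c_g$-equivariant self-maps of $\partial_F N$: uniqueness is what forces $\widetilde{c_{gh}} = \widetilde{c_g}\,\widetilde{c_h}$ and $\widetilde{c_n} = n$ for $n \in N$, and without it the assembled map need not be an action at all. This follows from the standard rigidity fact that two $N$-equivariant continuous maps $\phi, \psi$ into an $N$-boundary must coincide (apply strong proximality to $x \mapsto \tfrac{1}{2}(\delta_{\phi(x)} + \delta_{\psi(x)})$ and use minimality of the closed set where $\phi=\psi$); you should state and invoke this explicitly. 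Second, your metrizability reduction is misplaced: you cannot pass to a metrizable quotient boundary of $\partial_F N$ \emph{before} the extension step, since a quotient no longer enjoys the universal property on which your extension relies. The correct order is to extend the $G$-action on the (typically non-metrizable) $\partial_F N$ first, and only then pass to a metrizable $G$-factor, by choosing a separable $G$-invariant $C^*$-subalgebra of $C(\partial_F N)$ containing a function separating some $x$ from $nx \neq x$; this keeps the $N$-action on the factor nontrivial, which is what the final contradiction needs, and it uses countability of $G$ --- harmless here since $\Aut(X)$ is countable, but not automatic for an arbitrary discrete group as in the statement.
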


In particular, the kernel of any boundary action contains any normal amenable subgroup.

We have now assembled the ingredients to prove Theorem \ref{theorem:thm1}.

\begin{proof}[Proof of Theorem \ref{theorem:thm1}]
Let $(X, \sigma)$ be a transitive sofic shift. As periodic points are dense, there exists some $k$ such that $X$ contains $k$-periodic points and $\rm{Syn}_k \ne \emptyset$.  Since $X$ is not finite, $X$ also contains left-$k$-periodic points.

Corollaries \ref{parttwo} and \ref{partfour} show that $\Omega$ is an $\Aut(X)$-boundary. By Proposition \ref{proposition:furman}, any normal amenable subgroup of $\Aut(X)$ is contained in the kernel of a boundary action.

An element in $\Omega$ is a section of the projection $\pi:\Omega \to \rm{Per}_k/\langle \sigma \rangle$, so the kernel of $\Aut(X)$ acting on $\Omega$ must be contained in the kernel of $\Aut(X)$ acting on $Q_k$, the set of left-$k$-periodic points up to $k$. Thus, it follows from Lemma \ref{lem:Q_k} that the kernel of the $\Aut(X)$ action on $\Omega$ is precisely $\langle \sigma \rangle$, and we obtain the desired result.
\end{proof}

\section{Higher dimensions} \label{sec:higher dim}
We show that the direct analogue of Theorem \ref{theorem:thm1} in higher dimensions fails by giving a counterexample and explain why the methods of proof do not generalize even with stronger hypotheses. Consistent with the definition of one-dimensional shifts given in Section \ref{sec:background}, we define a $\Z^d$-shift to be a closed, translation-invariant subset of $\A^{\Z^d}$. A $\Z^d$-shift is an SFT if it can be described by forbidden patterns in $\A^\mathcal{F}$, for some finite set $\mathcal{F} \subset \Z^d$, and a $\Z^d$ sofic shift is a topological factor of a $\Z^d$ SFT. The automorphism group consists of self-homeomorphisms of the shift that commute with the shift maps, which can be identified with $\Z^d$.

Hochman \cite{HOC} constructs a two-dimensional SFT $X \subset \A^{\ZZ}$, which is topologically mixing and has positive entropy. Hochman explicitly computes the automorphism group to be $\ZZ \oplus \bigcup S_{i,j}$, where $\ZZ$ is generated by the shift maps and $\bigcup S_{i,j}$ is a directed union of infinitely many finite groups, arising from higher dimensional marker automorphisms. Amenability is closed under taking direct limits and sums; thus, the automorphism group is amenable. In higher dimensions, Ryan's Theorem holds \cite{HOC}, and the center is the subgroup generated by the shifts, $\ZZ$. In particular, $\Aut(X)$ has normal amenable subgroups that are not contained in the center. While this shift is topologically mixing, the set of periodic points is not dense, which suggests this may not be the right condition to impose.

There are various notions of uniform mixing in higher dimensions, (for example, strongly irreducible, uniform filling, and block gluing) each of which imply that periodic points are dense. In each case, if two allowable patterns are sufficiently far apart, there is another allowable pattern which agrees with the original patterns; the distinct notions of uniform mixing depends on the shape of patterns we consider. In contrast, for $d=1$, these definitions of uniform mixing are equivalent to topological mixing.

However, even with dense periodic points, we cannot construct a topological boundary for uniformly mixing $\Z^d$ SFTs as we did in the one-dimensional case. Because there are now more directions of periodicity, we cannot construct a space on which the automorphism group acts in the same manner. More specifically, we cannot define a $\Z^d$ cocycle as we did in equation \ref{eq:cocycle}.

In the case of the higher dimensional full shift, Frisch, Schlank, and Tamuz \cite{FST} show that any normal amenable subgroup must be contained in the subgroup generated by the shifts; unfortunately, their methods do not generalize to uniformly mixing SFTs. They construct a class of automorphisms of $\A^{\Z^d}$, induced by automorphisms of $\A^\Z$, which act independently on bi-infinite sequences of a configuration $x \in \A^{\Z^d}$. This relies strongly on the fact that in the full shift, there are no forbidden blocks. In a more general $\Z^d$-SFT, acting independently on lower dimensional subspaces may produce forbidden patterns. We note that higher dimensional marker automorphisms cannot arise from such a construction.

\bibliographystyle{abbrv}
\bibliography{normal-amenable}

\end{document}